\newtheorem{thm}{Theorem}[section] 
\newtheorem{cor}[thm]{Corollary}
\newtheorem{defn}[thm]{Definition}
\newtheorem{lem}[thm]{Lemma}
\newtheorem{ques}[thm]{Question}
\theoremstyle{definition}
\newtheorem{rem}[thm]{Remark}
\newcommand\operA[2]{{\if!#2!\operatorname{#1}\else{\operatorname{#1}_{#2}^{\phantom{I}}}\fi}} 
\newcommand\Cref[1]{{Corollary~\ref{#1}}}%
\newcommand{\Trace}[1][]{\if!#1!\operatorname{Tr}\else{\operatorname{Tr}_{#1}^{\phantom{I}}}\fi} 
\long\def\forget#1\forgotten{{}} %
\def\({\left(}
\def\){\right)}
\newcommand\LAY[3][]{{\begin{array}{c}\mbox{#2} \if#1!{}\else{+}\fi \\ \mbox{#3}\end{array}}}
\def\ps@pprintTitle{%
 \let\@oddhead\@empty
 \let\@evenhead\@empty
 \def\@oddfoot{}%
 \let\@evenfoot\@oddfoot}
\newcommand{\bigperp}{%
  \mathop{\mathpalette\bigp@rp\relax}%
  \displaylimits
}
\newcommand{\bigp@rp}[2]{%
  \vcenter{
    \m@th\hbox{\scalebox{\ifx#1\displaystyle2.1\else1.5\fi}{$#1\perp$}}
  }%
}
\renewcommand{\geq}{\geqslant}
\renewcommand{\leq}{\leqslant}
\newif\iffurther
\begin{document}

\title{Chain Lemma, Quadratic Forms and Symbol Length}

\author{Adam Chapman}
\address{School of Computer Science, Academic College of Tel-Aviv-Yaffo, Rabenu Yeruham St., P.O.B 8401 Yaffo, 6818211, Israel}
\email{adam1chapman@yahoo.com}
\author{Ilan Levin}
\address{Department of Mathematics, Bar-Ilan University, Ramat Gan, 55200}
\email{ilan7362@gmail.com}

\begin{abstract}
We want to bound the symbol length of classes in ${_{2^{m-1}}Br}(F)$ which are represented by tensor products of 5 or 6 cyclic algebras of degree $2^m$.
The main ingredients are the chain lemma for quadratic forms, a form of a generalized Clifford invariant and Pfister's and Rost's descriptions of 12- and 14-dimensional forms in $I^3 F$.
\end{abstract}

\keywords{
Brauer Group; Symbol Length; Quadratic Forms; Cyclic Algebras; Quaternion Algebras; Chain Lemma}
\subjclass[2020]{16K20 (primary); 11E04, 11E81, 15A66 (secondary)
}
\maketitle

\section{Introduction}

By the renowned Merkurjev-Suslin theorem (\cite{MS}), every class in ${_rBr}(F)$ is Brauer equivalent to a tensor product of cyclic algebras of degree $r$, under the assumption that $\operatorname{char}(F)$ is prime to $r$, and that $F$ contains a primitive $r$th root $\rho$. A cyclic algebra of this type takes the form
$$(\alpha,\beta)_{r,F}=F\langle i,j : i^r=\alpha, j^r=\beta, j i=\rho i j \rangle$$
for some $\alpha,\beta \in F^\times$.

Taking $r=2^m$, the symbol $(\alpha^2,\beta)_{2^m,F}$ is Brauer equivalent to $(\alpha,\beta)_{2^{m-1},F}$. Therefore, if one knows how to write a class $A$ in ${_{2^{m-1}}Br}(F)$ as a tensor product of cyclic algebras of degree $2^{m-1}$, one can easily produce a presentation of this Brauer class as a tensor product of cyclic algebras of degree $2^m$.
The opposite direction is not so simple, though:
\begin{ques}
Given a class $A$ in ${_{2^{m-1}}Br}(F)$ that can be written as a tensor product of $q$ cyclic algebras of degree $2^m$, how many cyclic algebras of degree $2^{m-1}$ are required to express it? Or equivalently, what is its symbol length in ${_{2^{m-1}}Br}(F)$?
\end{ques}

The case of $q=1$ was solved in \cite{Tignol:1983}, where it was shown that the Brauer class of a cyclic algebra of degree $2^m$ and exponent dividing $2^{m-1}$ can be written as a tensor product of at most two cyclic algebras of degree $2^{m-1}$. In \cite{ChapmanFlorenceMcKinnie:2022}, the case of $q=2$ was considered, and it was proven that the symbol length in ${_{2^{m-1}}Br}(F)$ in this case is at most 6, using the chain lemma for quaternion algebras. This number was reduced to 5 in \cite{Chapman:2022}, where upper bounds of 15 and 46 were provided for the cases of $q=3$ and 4, respectively, using Sivatski's chain lemma for biquaternion algebras (\cite{Sivatski:2012}). The goal of this paper is to answer this question for $q=5$ and $6$. The trick is to apply the chain lemma for quadratic forms and translate it carefully back to algebras, using Pfister's description of 12-dimensional (\cite{Pfister:1966}) forms and Rost's description of 14-dimensional forms in $I^3 F$.

\section{Chain Lemma for Quadratic Forms}

The chain lemma for quadratic forms is well-known in the literature, e.g., in \cite[Chapter 1, section 3]{EKM} and \cite[Chapter 1, section 5]{Lam:2005}. But for the applications, we need to count explicitly the number of steps connecting the different presentations.
Let $F$ be a field of $\operatorname{char}(F)\neq 2$ containing the square root of $-1$, and $n$ be a natural number.
Let $G=(V,E)$ be the labelled graph defined by $V=F^{\times n}$ and $(a_1,\dots,a_n)$ and $(b_1,\dots,b_n)$ are connected by a full edge $\xymatrix{ (a_1,\dots,a_n)\ar@{-}[r] & (b_1,\dots,b_n)}$ when for some $i\in \{1,\dots,n\}$, $a_k = b_k$ for all $k\neq i$ and $b_i=c^2 a_i$, and a dashed edge $\xymatrix{ (a_1,\dots,a_n)\ar@{--}[r] & (b_1,\dots,b_n)}$ when for some $1\leq i<j\leq n$, $a_k = b_k$ for all $k\neq i,j$ and $b_i=(1+\frac{a_j}{a_i})a_i$ and $b_j=(1+\frac{a_j}{a_i})a_j$.
For any $(a_1,\dots,a_n) \in V$ there is a natural ``underlying quadratic form" $\langle a_1,\dots,a_n \rangle$. The language of graph theory serves as a solid ground for discussing the chain lemma.

\begin{lem}
The pairs $(a,b)$ and $(b,a)$ are connected by at most 5 steps, of which at most three are full and two are dashed.
\end{lem}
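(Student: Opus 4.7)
The plan is to exhibit an explicit chain of length at most $3$ between $(a,b)$ and $(b,a)$, splitting into two cases according to whether $a+b$ vanishes; the stated bound of $5$ steps (with the given distribution of full and dashed edges) will then be a rather loose consequence.

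In the generic case $a+b\neq 0$, the key observation is that applying the dashed edge at the only available index pair $(1,2)$ to \emph{each} endpoint produces a pair whose first coordinate is $a+b$: from $(a,b)$ one reaches $(a+b,\,b(a+b)/a)$, while from $(b,a)$ one reaches $(a+b,\,a(a+b)/b)$. The ratio of the second coordinates is $(b/a)^2$, a square in $F^{\times}$, so a single full edge with $c=b/a$ connects them. This yields the three-step chain
\[
(a,b)\;\to\;(a+b,\,b(a+b)/a)\;\to\;(a+b,\,a(a+b)/b)\;\to\;(b,a),
\]
using two dashed edges and one full edge. In the degenerate case $b=-a$ the dashed move is unavailable because $1+a_j/a_i=0$; instead I would use that $-1$ is a square in $F$ (since $F$ contains $\sqrt{-1}$), so multiplying a coordinate by $-1$ is a full edge, and the chain $(a,-a)\to(a,a)\to(-a,a)$ of two full edges finishes the job.

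In both cases the total is at most $3$ steps, with at most $2$ full and at most $2$ dashed edges, well within the stated bound. The only genuine subtlety is spotting the symmetric dashed move in the generic case: one has to notice that applying the same operation on $(a,b)$ and on $(b,a)$ yields pairs with matching first coordinate and second coordinates that differ by the square factor $(b/a)^2$. The degenerate case relies crucially on the hypothesis $\sqrt{-1}\in F$; without it one would need a preparatory full edge to displace $a$ or $b$ before the dashed move could be applied, which is presumably why the statement is phrased with the more generous bound of $5$ rather than $3$.
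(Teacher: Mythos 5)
Your proof is correct, and it takes a genuinely different --- and more economical --- route than the paper's. The paper argues unidirectionally: assuming $a \notin bF^{\times 2}$ (so that $\langle a,b\rangle$ is anisotropic, using $\sqrt{-1}\in F$, which guarantees the sums appearing in the dashed moves are nonzero), it applies five successive forward moves, dashed--full--dashed--full--full, passing through $\big(\big(\tfrac{a+b}{b}\big)^2 b,\big(\tfrac{a+b}{b}\big)^2 a\big)$, while the case $a=x^2b$ is dispatched by two full edges. You instead meet in the middle: the same dashed move applied to both endpoints gives $(a+b,\,b(a+b)/a)$ and $(a+b,\,a(a+b)/b)$, and since $a(a+b)/b=(a/b)^2\cdot b(a+b)/a$, a single full edge joins them, for a total of three steps (two dashed, one full), comfortably within the stated bounds; your split on $a+b=0$ (two full edges, using $-1\in F^{\times 2}$) covers the degenerate case just as the paper's split on $a\in bF^{\times 2}$ does. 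The one point you should flag explicitly is that your chain traverses its last dashed edge backwards: the dashed relation is not symmetric as a rewriting rule (applying the dashed move to $(a+b,\,a(a+b)/b)$ yields $\big((a+b)^2/b,\,a(a+b)^2/b^2\big)$, not $(b,a)$), so you are relying on $G$ being an honest undirected graph. This is consistent with the paper's formulation and harmless for the applications, because the conclusion of Lemma~\ref{Step} is symmetric in $v$ and $w$ (the opposite algebra of a symbol of degree $2^{m-1}$ is again such a symbol). What your shorter chain buys, if propagated, is better constants throughout: each swap would cost $3$ rather than $5$ steps, the corollary would give $3(n-1)$ steps per permutation, Theorem~\ref{ChainQ} would improve to $n^2+3(n-1)$ total steps, and the final symbol-length bounds of $200$ and $362$ would drop accordingly.
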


\begin{proof}
If $a=x^2 b$ for some $x \in F^\times$, then 
$\xymatrix{ (a,b)\ar@{-}[r] & (\frac{1}{x^2}a,b)\ar@{-}[r] & (\frac{1}{x^2}a,x^2b)=(b,a)}$. Assume $a \neq x^2 b$ for any $x\in F^\times$. This means in particular that the quadratic form $\langle a,b \rangle$ is anisotropic, for we are dealing with a field containing the square-root of $-1$.
Then
\begin{eqnarray*}
\xymatrix{
(a,b) \ar@{--}[r] & (a(1+\frac{b}{a}),b(1+\frac{b}{a}))=(a+b,b+\frac{b^2}{a})\ar@{-}[d]\\ & (a+b,\frac{a^2}{b^2}(b+\frac{b^2}{a}))=(a+b,\frac{a^2}{b}+a)\ar@{--}[d]\\ 
& (\frac{a^2}{b}+2a+b, \frac{a^2}{b}+a+\frac{(\frac{a^2}{b}+a)^2}{a+b})=((\frac{a+b}{b})^2 b,(\frac{a+b}{b})^2 a) \ar@{-}[d]\\
(b,a) \ar@{-}[r]&((\frac{a+b}{b})^2 b,a) 
}
\end{eqnarray*}
\end{proof}

\begin{cor}
For any permutation $\sigma \in S_n$, the vertex $(a_1,\dots,a_n)$ is connected to $(a_{\sigma(1)},\dots,a_{\sigma(n)})$ by at most $5(n-1)$ steps, out of which at most $3(n-1)$ are full and $2(n-1)$ are dashed.
\end{cor}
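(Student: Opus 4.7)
The plan is to reduce the general permutation case to the pair-swap case handled by the lemma, via the standard generation of $S_n$ by transpositions.

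First I would recall the elementary fact that every permutation $\sigma \in S_n$ can be written as a product of at most $n-1$ transpositions. This is standard: decomposing $\sigma$ into disjoint cycles, a cycle of length $k$ is a product of $k-1$ transpositions, and since the cycle lengths (including fixed points as $1$-cycles) sum to $n$, the total number of transpositions is $n - (\text{number of cycles}) \leq n-1$. Fix such a decomposition $\sigma = \tau_1 \tau_2 \cdots \tau_s$ with $s \leq n-1$, where each $\tau_\ell$ is a transposition $(i_\ell\ j_\ell)$.

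Next I would apply the lemma one transposition at a time. At each stage, the transposition $\tau_\ell$ only affects two coordinates; by the lemma, we can connect the current vertex to the vertex with those two coordinates swapped using at most $5$ steps, of which at most $3$ are full and at most $2$ are dashed, while leaving all other coordinates unchanged. This is because the lemma's chain is expressed entirely in terms of the two active entries, so its edges extend verbatim to edges in the graph $G$ on $V = F^{\times n}$.

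Concatenating these $s \leq n-1$ chains yields a walk from $(a_1,\dots,a_n)$ to $(a_{\sigma(1)},\dots,a_{\sigma(n)})$ of total length at most $5(n-1)$, with at most $3(n-1)$ full edges and $2(n-1)$ dashed edges. The only mildly subtle point, and hence the only possible obstacle, is to make sure the transposition decomposition is applied in the correct order so that the intermediate tuples really match what the lemma requires at each step; but since each transposition swaps two coordinates of the current tuple and the lemma handles any unordered pair, this is automatic.
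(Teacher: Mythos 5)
Your proposal is correct and is exactly the paper's argument: the paper's proof is the single sentence that every permutation is realized by at most $n-1$ swaps, with the lemma then applied to each swap, which is precisely your transposition decomposition plus concatenation. You merely spell out the details the paper leaves implicit (the cycle-count bound $n - (\text{number of cycles}) \leq n-1$ and the observation that the lemma's two-entry chain embeds verbatim into the $n$-tuple graph since full and dashed edges act on arbitrary slots), both of which are fine.
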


\begin{proof}
This follows from the fact that every permutation can be realized by at most $n-1$ swaps.
\end{proof}

\begin{thm}\label{ChainQ}
Every two vertices with isometric underlying quadratic forms are connected by a chain of steps consisting of at most $\frac{(n+1)n}{2}+3(n-1)$ full steps and $\frac{n(n-1)}{2}+2(n-1)$ dashed steps, in total at most $n^2+5n-5$ steps.
\end{thm}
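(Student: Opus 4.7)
My plan is to prove \Tref{ChainQ} by induction on $n$. The base case $n=1$ is immediate: $\langle a_1\rangle\cong\langle b_1\rangle$ forces $b_1=c^2a_1$, so one full edge suffices, matching the claimed bound.

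For the inductive step, given $v=(a_1,\dots,a_n)$ and $w=(b_1,\dots,b_n)$ with isometric underlying forms, I would write $b_1=\sum_{i=1}^n c_i^2a_i$ (possible since $\langle a_1,\dots,a_n\rangle$ represents $b_1$), choosing a representation with the minimum number of nonzero coefficients; set $I=\{i:c_i\neq 0\}$ and $k=|I|$. If $1\notin I$, I would first use the preceding Lemma on positions $1$ and some $i_0\in I$ to swap them, at a cost of at most $3$ full and $2$ dashed steps. Then I would apply $k$ full edges, replacing $a_i$ by $c_i^2a_i$ for each $i\in I$, followed by $k-1$ dashed edges successively absorbing the positions of $I\setminus\{1\}$ into position $1$. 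After these moves position $1$ holds $\sum_{i\in I}c_i^2a_i=b_1$, and Witt cancellation guarantees that the last $n-1$ coordinates now carry a form isometric to $\langle b_2,\dots,b_n\rangle$, so the inductive hypothesis finishes the chain in at most $(n-1)^2+5(n-1)-5$ further steps with the claimed split between full and dashed. Adding $3+k$ full and $2+(k-1)$ dashed from the first phase to $\tfrac{(n-1)n}{2}+3(n-2)$ full and $\tfrac{(n-1)(n-2)}{2}+2(n-2)$ dashed from the inductive step, and using $k\leq n$, yields exactly the announced bounds $\tfrac{(n+1)n}{2}+3(n-1)$ full and $\tfrac{n(n-1)}{2}+2(n-1)$ dashed.

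The main technical obstacle is ensuring that every dashed edge in the absorption phase is actually legal, i.e., that the first coordinate remains in $F^\times$ at every stage. Concretely, after reindexing $I\setminus\{1\}=\{i_2,\dots,i_k\}$, the $m$-th dashed move requires the partial sum $c_1^2a_1+\sum_{j=2}^{m}c_{i_j}^2a_{i_j}$ to be nonzero. The minimality of the chosen representation supplies exactly this: if some proper nonempty partial sum vanished, transferring those terms to the opposite side would produce a strictly shorter representation of $b_1$, contradicting minimality. A pleasant side effect is that this argument handles the anisotropic and isotropic cases uniformly, so there is no need to split into cases.
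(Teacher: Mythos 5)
Your proposal is correct and is essentially the paper's own argument: both represent a target coefficient by the current form, spend at most $k$ full edges scaling by squares and $k-1$ dashed edges absorbing the sum (with vanishing partial sums ruled out --- your minimal-representation argument is equivalent to the paper's procedure of erasing vanishing partial sums), and charge at most $3$ full and $2$ dashed steps per swap via the Lemma. The only difference is bookkeeping: you induct on $n$, swapping once per level and invoking Witt cancellation explicitly, whereas the paper fills all the slots first and performs a single final permutation costing $3(n-1)$ full and $2(n-1)$ dashed steps, and the resulting counts coincide exactly.
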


\begin{proof}
Let $( a_1,\dots,a_n )$ and $( b_1,\dots,b_n )$ be vertices of the graph such that $\langle a_1, \dots , a_n \rangle \simeq \langle b_1, \dots , b_n \rangle$. The slot $b_n$ is represented by $\langle a_1, \dots , a_n \rangle$, which means that there exist $x_1, \dots , x_n \in F$ such that $a_1x_1^2+ \dots + a_nx_n^2 = b_n$.

Let $i_1,\dots,i_\ell$ be indices in $\{1,\dots,n\}$ for which $x_{i_1},\dots,x_{i_\ell}$ are nonzero, $b_n=\sum_{j=1}^{\ell} a_{i_j} x_{i_j}^2$ and no partial sum of this series vanishes. This subsequence exists and can be explicitly found by the following procedure: First, take the sum $a_1x_1^2+ \dots + a_nx_n^2$; then, whenever there is an index $k$ for which $x_k=0$, simply erase the summand $a_k x_k^2$; and then, if there is a partial sum that vanishes, erase the entire partial sum.
Now, 
\begin{eqnarray*}
\xymatrix{
(a_{i_1},a_{i_2},\dots,a_{i_\ell}) \ar@{-}[r] & (a_{i_1}x_{i_1}^2,a_{i_2},\dots,a_{i_\ell})\ar@{-}[r] & (a_{i_1}x_{i_1}^2,a_{i_2}x_{i_2}^2,\dots,a_{i_\ell}) \ar@{-}[d] \\    & (a_{i_1}x_{i_1}^2,a_{i_2}x_{i_2}^2,\dots,a_{i_\ell}x_{i_\ell}^2) \ar@{-}[r] & \dots
}
\end{eqnarray*}
These are altogether $\ell$ full steps. Then
\begin{eqnarray*}
\xymatrix{
(a_{i_1}x_{i_1}^2,a_{i_2}x_{i_2}^2,\dots,a_{i_\ell}x_{i_\ell}^2) \ar@{--}[r] & (a_{i_1}x_{i_1}^2+a_{i_2}x_{i_2}^2, * ,\dots,a_{i_\ell}x_{i_\ell}^2) \ar@{--}[d] \\  (a_{i_1}x_{i_1}^2+a_{i_2}x_{i_2}^2+\dots+a_{i_\ell}x_{i_\ell}^2,*,\dots,*)\ar@{--}[r] & \dots
}
\end{eqnarray*}
are additional $\ell-1$ dashed steps.
This makes the $i_1$th slot equal to $b_n$, and it required at most $n$ full steps and $n-1$ dashed steps.
Similarly, it takes at most $n-1$ full steps and $n-2$ dashed steps to make another slot equal to $b_{n-1}$, and so on.
In the end, we get $(b_{\sigma(1)},\dots,b_{\sigma(n)})$ for some $\sigma \in S_n$, which is in turn connected to $(b_1,\dots,b_n)$ by at most $3(n-1)$ full steps and $2(n-1)$ dashed steps.
Altogether, $(a_1,\dots,a_n)$ is connected to $(b_1,\dots,b_n)$ by at most $\frac{(n+1)n}{2}+3(n-1)$ full steps and $\frac{n(n-1)}{2}+2(n-1)$ dashed steps. I.e., at most $n^2+5(n-1)$ steps in total.
\end{proof}

Following the same proof as above, one can produce this somewhat more technical statement:

\begin{thm}\label{ChainQ2}
If $\langle a_1,\dots,a_n \rangle \simeq \langle b_1,\dots,b_n \rangle$, then for any $k \in \{1,\dots,n-1\}$, there exist $c_{k+1},\dots,c_n \in F^\times$ such that $\langle a_1,\dots,a_n \rangle \simeq \langle b_1,\dots,b_k,c_{k+1},\dots,c_n \rangle$ and $(a_1,\dots,a_n)$ is connected to $(b_1,\dots,b_k,c_{k+1},\dots,c_n)$ by a chain consisting of at most $\frac{k(2n-k+1)}{2}+3k$ full steps, and at most $\frac{k(2n-k-1)}{2}+2k$ dashed steps. Altogether, at most $k(2n-k)+5k=2nk-k^2+5k$ steps are needed.
\end{thm}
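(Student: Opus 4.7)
The plan is to replay the proof of \Tref{ChainQ}, but halt the matching procedure after only $k$ of the target values $b_i$ have been fitted into slots, rather than running it to completion. Concretely, I would match $b_1,b_2,\dots,b_k$ one at a time into $k$ of the $n$ coordinates of $(a_1,\dots,a_n)$, and then perform a final permutation that brings $b_i$ into slot $i$ for each $i\in\{1,\dots,k\}$; the remaining $n-k$ coordinates will then hold the desired values $c_{k+1},\dots,c_n$.

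For the matching phase I would argue inductively. Suppose that after $j-1$ rounds, $j-1$ of the coordinates are exactly $b_1,\dots,b_{j-1}$, and the $n-j+1$ free coordinates carry a quadratic form isometric to $\langle b_j,\dots,b_n\rangle$; this isometry is furnished by Witt cancellation, which is available since $\chr(F)\neq 2$. Then $b_j$ is represented by the free coordinates, and the very procedure used in the proof of \Tref{ChainQ} (first correcting each relevant free slot by a square via full edges, then collecting the summands via dashed edges, with vanishing partial sums pruned away in advance) converts one of the free slots into $b_j$ using at most $n-j+1$ full steps and $n-j$ dashed steps, all performed inside the free coordinates so that the already-matched slots remain untouched. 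Summing over $j=1,\dots,k$ gives $\sum_{j=1}^k(n-j+1)=\frac{k(2n-k+1)}{2}$ full steps and $\sum_{j=1}^k(n-j)=\frac{k(2n-k-1)}{2}$ dashed steps.

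After the matching phase, $b_1,\dots,b_k$ occupy some $k$ slots in an arbitrary order. To rearrange them so that $b_i$ lands in slot $i$ for every $i\in\{1,\dots,k\}$ requires at most $k$ transpositions of pairs of coordinates: each transposition deposits one more $b_i$ in its designated slot, possibly displacing an auxiliary value or another $b_j$, which is then dealt with in the subsequent step. By the first lemma of this section, each transposition is realized by at most $3$ full and $2$ dashed steps, contributing $3k$ full and $2k$ dashed steps in total. Adding the two phases yields exactly the stated counts.

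The only delicate point, already handled in the proof of \Tref{ChainQ}, is ensuring that at each round the chosen representation of $b_j$ by the current free coordinates has no vanishing partial sum; this is a finite bookkeeping task via the same pruning procedure, so I do not foresee a genuine obstacle. The main conceptual input is simply Witt cancellation, which guarantees that the matching can proceed inductively without getting stuck.
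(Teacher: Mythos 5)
Your proof is correct and takes essentially the same route as the paper's: match $b_1,\dots,b_k$ one at a time at a cost of $n,n-1,\dots,n-k+1$ full and $n-1,n-2,\dots,n-k$ dashed steps, then use at most $k$ swaps, each realized by at most $3$ full and $2$ dashed steps via the swap lemma. You additionally make explicit two points the paper leaves implicit in its one-line appeal to the proof of \Tref{ChainQ}, namely the inductive use of Witt cancellation to keep the matching going and the pruning of vanishing partial sums, which only strengthens the write-up.
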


\begin{proof}
It requires at most $n+(n-1)+\dots+(n-k+1)=\frac{k(2n-k+1)}{2}$ full steps and $(n-1)+(n-2)+\dots+(n-k)=\frac{k(2n-k-1)}{2}$ dashed steps to obtain an $n$-tuple with $b_1,\dots,b_k$ appearing somewhere in the $n$-tuple. It then requires at most $k$ swaps to move the slots to the first $k$ places. The statement thus follows.
\end{proof}

\section{Applications to Symbol Length}
Let $m$ be a natural number and $F$ a field of $\operatorname{char}(F)\neq 2$ containing primitive $2^{m+1}$th roots of unity.
Following \cite{Baek:2019}, we define:

\begin{defn}
For a given natural number $n$, set
$$ A = \{ (a_1,\dots,a_{2n}) : a_1,\dots,a_{2n} \in F^{\times}\},$$
$$ B = \{ (b_1, \dots, b_{2n+2}) : b_1,\dots,b_{2n+2}\in F^\times \wedge \prod_{i=1}^{2n+2} b_{i} \in (F^{\times})^2 \},$$

$\phi : A \rightarrow B,\text{  } \phi(a_1, \dots, a_{2n})=(b_1, \dots, b_{2n+2})$ where
 
$$b_1=1, b_{2n+2}=\prod_{j=1}^{2n} a_j,$$

and for all $i \in \{1,\dots,n\}$ we have
\begin{eqnarray*}
b_{2i} & = & a_{2i-1} \prod_{j=1}^{2i-1} b_j^{-1},\\
b_{2i+1} & = & a_{2i} \prod_{j=1}^{2i-1} b_j^{-1}.\\
\end{eqnarray*}

And set $\psi : B \rightarrow A, \text{ } \psi(b_1, \dots, b_{2n+2})=(a_1, \dots, a_{2n})$, where for all $i \in \{1,\dots,n\}$ we have
\begin{eqnarray*}
a_{2i-1} & = & b_{2i} \prod_{j=1}^{2i-1} b_j,\\
a_{2i} & = & b_{2i+1} \prod_{j=1}^{2i-1} b_j.\\
\end{eqnarray*}

\end{defn}

\begin{rem}
The composition $\psi \circ \phi$ is the identity map $\operatorname{Id}$ on $A$.
\end{rem}

\begin{proof}
Take $(a_1,\dots,a_{2n}) \in A$.
Then $(b_1,\dots,b_{2n+2})=\phi(a_1,\dots,a_{2n})$ is defined by $$b_1=1, b_{2n}=\prod_{j=1}^{2n} a_j,$$

and for all $i \in \{1,\dots,n\}$ 
\begin{eqnarray*}
b_{2i} & = & a_{2i-1} \prod_{j=1}^{2i-1} b_j^{-1},\\
b_{2i+1} & = & a_{2i} \prod_{j=1}^{2i-1} b_j^{-1}.\\
\end{eqnarray*}
Isolating the $a$'s, we obtain that for all $i \in \{1,\dots,n\}$, we have
\begin{eqnarray*}
a_{2i-1} & = & b_{2i} \prod_{j=1}^{2i-1} b_j,\\
a_{2i} & = & b_{2i+1} \prod_{j=1}^{2i-1} b_j.\\
\end{eqnarray*}
This means $\psi(b_1,\dots,b_{2n+2})=(a_1,\dots,a_{2n})$, and thus the composition $\psi \circ \phi$ is the identity map $\operatorname{Id}$ on $A$.

\end{proof}

\begin{defn}
Let $a_1,\dots,a_{2n}\in F^{\times}$. We define the following tensor product
$$C_m(a_1,\dots,a_{2n}) = (a_1,a_2)_{2^m,F} \otimes \dots \otimes (a_{2n-1},a_{2n})_{2^m,F}.$$
\end{defn}

\begin{rem}\label{DefRem}
Since for $\pi=\langle b_1,\dots,b_{2n+2} \rangle \in I^2 F$, the Clifford algebra $C\ell(\pi)$ is Brauer equivalent to $(b_1b_2,b_1b_3)_{2,F} \otimes (b_1b_2b_3b_4,b_1b_2b_3b_5)_{2,F} \otimes \dots \otimes (b_1 \dots b_{2n-1} b_{2n},b_1 \dots b_{2n-1} b_{2n+1})_{2,F}$, the algebra $C_1(\psi( b_1,\dots,b_{2n+2}))$ is in fact Brauer equivalent to the Clifford algebra of $\langle b_1,\dots,b_{2n+2} \rangle$ for any $(b_1,\dots,b_{2n+2})$ in $B$. The latter condition is the one that guarantees that  $\langle b_1,\dots,b_{2n+2} \rangle$ is in $I^2 F$. Another fact to observe is that $C_m(w)^{\otimes 2^k}$ is Brauer equivalent to $C_{m-k}(w)$ for any $k<m$ and $w\in A$.
\end{rem}

For $\varphi=(b_1,\dots,b_{2n+2}) \in B$, let $\langle \varphi \rangle$ denote the form $\langle b_1,\dots,b_{2n+2} \rangle $.

\begin{lem}\label{Step}
If two vectors $v$ and $w$ in $B$ are connected by an edge of either type, then $C_m(\psi(v))\otimes C_m(\psi(w))^{op}$ is Brauer equivalent to one symbol in ${_{2^{m-1}}Br}(F)$.
\end{lem}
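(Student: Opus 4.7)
The plan is to treat the full-edge and dashed-edge cases separately, computing how $\psi(v)$ and $\psi(w)$ differ in $A$ and simplifying $C_m(\psi(w))\otimes C_m(\psi(v))^{op}$ at the level of the Brauer group. The main tools available are bimultiplicativity of the symbol $(\cdot,\cdot)_{2^m,F}$ in each slot, the reduction $(x^2,y)_{2^m,F}\sim(x,y)_{2^{m-1},F}$ noted in the introduction, the Steinberg relation $(\lambda,1-\lambda)_{2^m,F}\sim 1$, and the triviality $(x,-1)_{2^m,F}\sim 1$; the last one holds because the primitive $2^{m+1}$th root of unity in $F$ makes $-1$ a $2^m$th power. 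A corollary of the last two is $(x,x)_{2^m,F}\sim 1$, via the identity $\{x,-x\}=0$ in $K_2$.

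For a full edge $b_i'=c^2 b_i$, inspection of the defining formulas for $\psi$ shows that each coordinate transforms as $a_k\mapsto c^{2\varepsilon_k}a_k$ for some $\varepsilon_k\in\{0,1\}$ determined by whether $b_i$ occurs in the expression for $a_k$. Expanding $C_m(\psi(w))\otimes C_m(\psi(v))^{op}$ by bimultiplicativity and absorbing the trivial $(c^2,c^2)_{2^m,F}$ contributions leaves a product of factors of the form $(c^2,a_k)_{2^m,F}\sim(c,a_k)_{2^{m-1},F}$; collecting them in the second slot produces the single symbol $(c,z)_{2^{m-1},F}$ for an explicit $z\in F^{\times}$.

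For a dashed edge at positions $i<j$, one first rewrites $b_i'=\lambda b_i$ and $b_j'=\lambda b_j$ with $\lambda=(b_i+b_j)/b_i$, so that each $a_k$ transforms as $a_k\mapsto\lambda^{e_k}a_k$ with $e_k\in\{0,1,2\}$ equal to the total multiplicity of $b_i$ and $b_j$ in $a_k$. The same expansion, now killing the trivial $(\lambda,\lambda)_{2^m,F}$ terms, collapses $C_m(\psi(w))\otimes C_m(\psi(v))^{op}$ into a single symbol $(\lambda,\prod_l b_l^{f_l})_{2^m,F}$ for certain integer exponents $f_l$. Rewriting the Steinberg relation as $(\lambda,b_j)_{2^m,F}\sim(\lambda,b_i)_{2^m,F}$ via $\lambda=1-(-b_j/b_i)$ together with $(\lambda,-1)_{2^m,F}\sim 1$ then allows us to eliminate $b_j$ from the exponent, and the resulting symbol should reduce to $(\lambda,y^2)_{2^m,F}\sim(\lambda,y)_{2^{m-1},F}$ for an explicit $y\in F^{\times}$.

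The main obstacle is the combinatorial bookkeeping of the exponents $\varepsilon_k$, $e_k$ and $f_l$, which depend intricately on the parities of $i,j$ and on which partial products $\prod_{l=1}^{2k-1}b_l$ they fall inside. A structural sanity check is that $C_m(\psi(v))\otimes C_m(\psi(w))^{op}$ already lies in ${_{2^{m-1}}Br}(F)$: by \Rref{DefRem}, raising it to the $2^{m-1}$st power yields $C_1(\psi(v))\otimes C_1(\psi(w))^{op}$, which is Brauer trivial as the difference of Clifford invariants of the isometric forms $\langle v\rangle$ and $\langle w\rangle$. This forces the exponents to combine into perfect squares after Steinberg simplification, which is what makes the final symbol descend from degree $2^m$ to degree $2^{m-1}$.
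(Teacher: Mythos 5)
Your strategy is the paper's own: split on edge type, compute how $\psi$ transports the change in the $b$-coordinates to the $a$-coordinates (your $\varepsilon_k\in\{0,1\}$ and $e_k\in\{0,1,2\}$ match the paper's computation exactly), expand $C_m(\psi(w))\otimes C_m(\psi(v))^{op}$ by bimultiplicativity, kill the Steinberg-type terms using the fact that the primitive $2^{m+1}$th root of unity makes $-1$ a $2^m$th power, and descend via $(x^2,y)_{2^m,F}\sim (x,y)_{2^{m-1},F}$. The full-edge case and the generic dashed case go through as you describe and agree with the cases the paper works out explicitly.

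There are, however, two genuine problems. First, your dashed-edge recipe fails as stated in the case $j=2n+2$: the formulas defining $\psi$ involve only $b_1,\dots,b_{2n+1}$, so $b_{2n+2}$ never occurs in any $a_k$, and ``eliminating $b_j$ from the exponent'' is vacuous, while the leftover second slot works out to $b_1b_2\cdots b_{2n+1}b_i^{-1}$ up to squares --- many odd exponents, nowhere near a square. The missing ingredient is precisely the defining condition of $B$, namely $\prod_{i=1}^{2n+2}b_i\in (F^{\times})^2$, which lets one rewrite $b_1\cdots b_{2n+1}\equiv b_{2n+2}$ modulo squares and only then apply Steinberg with $\lambda=1+b_{2n+2}/b_i$; your proposal never invokes this hypothesis, whereas the paper isolates exactly this situation (``the case of $\ell=2n+2$ which is somewhat special'') and uses the condition there. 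Second, your structural sanity check cannot ``force the exponents to combine into perfect squares'': knowing that $C_m(\psi(v))\otimes C_m(\psi(w))^{op}$ has exponent dividing $2^{m-1}$ only tells you that the quaternion $(\lambda,u)_{2,F}$ splits, i.e.\ that $u$ is a norm from $F[\sqrt{\lambda}]$, not that $u$ is a square modulo the Steinberg-trivial elements; indeed, by Tignol's theorem cited in the introduction, a single degree-$2^m$ symbol of exponent dividing $2^{m-1}$ is in general only a product of \emph{two} degree-$2^{m-1}$ symbols, not one. So the single-symbol conclusion cannot be outsourced to the sanity check; it must come from the explicit exponent bookkeeping that you defer as ``the main obstacle,'' and that bookkeeping (in all parity cases, with the $B$-condition in the boundary case) is exactly the content of the paper's proof.
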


\begin{proof}
Write $v=(b_1,\dots,b_{2n+2})$. Set $(a_1,\dots,a_{2n})=\psi(v)$ and $(c_1,\dots,c_{2n})=\psi(w)$.
Suppose $w$ is connected to $v$. Then $w=(b_1,\dots,b_{k-1},\alpha^2 b_k,b_{k+1},\dots,b_{2n+2})$ for some $k \in \{1,\dots,2n+2\}$ and $\alpha \in F^\times$.
If $k=1$, then $c_i=\alpha^2 a_i$ for all $i$, and so 
$C_m(\psi(w))=(c_1,c_2)_{2^m,F} \otimes \dots \otimes (c_{2n-1},c_{2n})_{2^m,F}=(\alpha^2 a_1,\alpha^2 a_2)_{2^m,F} \otimes \dots \otimes (\alpha^2 a_{2n-1},\alpha^2 a_{2n})_{2^m,F}=C_m(\psi(v)) \otimes (\alpha^2, \frac{a_2}{a_1}\cdot \frac{a_4}{a_3} \cdot \dots \cdot \frac{a_{2n}}{a_{2n-1}})_{2^m,F}$.
Therefore, $C_m(\psi(v))^{op}\otimes C_m(\psi(w))$ is Brauer equivalent to $(\alpha, \frac{a_2}{a_1}\cdot \frac{a_4}{a_3} \cdot \dots \cdot \frac{a_{2n}}{a_{2n-1}})_{2^{m-1},F}$.
Similarly, for $k\geq 2$ we also have $C_m(\psi(v))^{op}\otimes C_m(\psi(w)) \sim_{Br} (\alpha^2,*)_{2^m,F} \sim_{Br} (\alpha,*)_{2^{m-1},F}$, where $*$ is some product of the $b$'s and their inverses.

Suppose now that $v$ and $w$ are connected by a dashed edge. Then $w=(b_1,\dots,b_{k-1},b_k(1+\frac{b_\ell}{b_k}),b_{k+1},\dots,b_{\ell-1},b_\ell (1+\frac{b_\ell}{b_k}),b_{\ell+1},\dots,b_n)$ for some $1\leq k<\ell \leq 2n+2$.
There are several cases here to cover, most of which are similar to each other. We shall present explicitly two cases to demonstrate how the argument goes.

The first case we consider is when $k$ and $\ell$ are odd and $3\leq k < \ell \leq 2n+1$. The cases of $k$ or $\ell$ being even or $k=1$ follow a similar argument (with some minor adaptations), as long as $\ell \leq 2n+1$.
In this case, $c_i=a_i$ for $i\leq k-2$, $c_i=(1+\frac{b_\ell}{b_k})a_i$ for $k-1\leq i \leq \ell-2$, and $c_i=(1+\frac{b_\ell}{b_k})^2 a_i$ for $i \geq \ell-1$.
Hence, $C_m(\psi(w))=(a_1,a_2)_{2^m,F} \otimes \dots \otimes  (a_{k-2},(1+\frac{b_\ell}{b_k}) a_{k-1})_{2^m,F} \otimes ((1+\frac{b_\ell}{b_k}) a_k,(1+\frac{b_\ell}{b_k}) a_{k+1})_{2^m,F} \otimes \dots \otimes ((1+\frac{b_\ell}{b_k}) a_{\ell-2},(1+\frac{b_\ell}{b_k})^2 a_{\ell-1})_{2^m,F} \otimes \dots \otimes ((1+\frac{b_\ell}{b_k})^2 a_{2n-1},(1+\frac{b_\ell}{b_k})^2 a_{2n})_{2^m,F}$.
This is Brauer equivalent to $C_m(\psi(v)) \otimes ((1+\frac{b_\ell}{b_k}),a_{k-2}^{-1} \cdot \frac{a_{k+1}}{a_k} \cdot \ldots \cdot \frac{a_{\ell-3}}{a_{\ell-4}} a_{\ell-1})_{2^m,F} \otimes ((1+\frac{b_\ell}{b_k})^2,\beta)_{2^m,F}$ for some $\beta \in F^\times$.
Since $a_{k-2}^{-1} \cdot \frac{a_{k+1}}{a_k} \cdot \ldots \cdot \frac{a_{\ell-3}}{a_{\ell-4}} a_{\ell-1}$ is $\gamma^2 \frac{b_\ell}{b_k}$ for some $\gamma \in F^\times$, we get that $C_m(\psi(w))$ is Brauer equivalent to $C_m(\psi(v)) \otimes (1+\frac{b_\ell}{b_k},\frac{b_\ell}{b_k})_{2^m,F} \otimes (1+\frac{b_\ell}{b_k},\beta\gamma)_{2^{m-1},F}$.
Because of the assumption, that $F$ contains a primitive $2^{m+1}$th root of unity, $(1+\frac{b_\ell}{b_k},\frac{b_\ell}{b_k})_{2^m,F}=(1+\frac{b_\ell}{b_k},-\frac{b_\ell}{b_k})_{2^m,F}$ is split, and thus, $C_m(\psi(w)) \otimes C_m(\psi(v))^{op}$ is Brauer equivalent to $(1+\frac{b_\ell}{b_k},\beta\gamma)_{2^{m-1},F}$.

We now consider the case of $\ell=2n+2$ which is somewhat special. For simplicity's sake, assume $k$ is still odd and $k\geq 3$.
Then $c_i=a_i$ for $i\leq k-2$ and $c_i=(1+\frac{b_{2n+2}}{b_k})a_i$ for $k-1\leq i$.
Hence, $C_m(\psi(w))=(a_1,a_2)_{2^m,F} \otimes \dots \otimes  (a_{k-2},(1+\frac{b_{2n+2}}{b_k}) a_{k-1})_{2^m,F} \otimes ((1+\frac{b_{2n+2}}{b_k}) a_k,(1+\frac{b_{2n+2}}{b_k}) a_{k+1})_{2^m,F} \otimes \dots \otimes ((1+\frac{b_{2n+2}}{b_k}) a_{2n-1},(1+\frac{b_{2n+2}}{b_k}) a_{2n})_{2^m,F}$.
This is Brauer equivalent to $C_m(\psi(v)) \otimes (1+\frac{b_{2n+2}}{b_k},a_{k-2}^{-1} \frac{a_{k+1}}{a_k} \cdot \dots \cdot \frac{a_{2n}}{a_{2n-1}})_{2^m,F}$. Since $b_{2n+2}$ is $\alpha^2 b_1 b_2 \dots b_{2n-1}$ for some $\alpha \in F^\times$, we have $a_{k-2}^{-1} \frac{a_{k+1}}{a_k} \cdot \dots \cdot \frac{a_{2n}}{a_{2n-1}}=\beta^2 \frac{b_{2n+2}}{b_k}$ for some $\beta \in F^\times$, and thus, $C_m(\psi(w))$ is Brauer equivalent to $C_m(\psi(v)) \otimes (1+\frac{b_{2n+2}}{b_k},\frac{b_{2n+2}}{B_k})_{2^m,F} \otimes (1+\frac{b_{2n+2}}{b_k},\beta^2)_{2^m,F}$. Again, $(1+\frac{b_{2n+2}}{b_k},\frac{b_{2n+2}}{B_k})_{2^m,F}$ is split, and thus $C_m(\psi(w))\otimes C_m(\psi(v))^{op}$ is Brauer equivalent to $(1+\frac{b_{2n+2}}{b_k},\beta)_{2^{m-1},F}$.
\end{proof}

Our main theorem follows from the previous lemma.

\begin{thm}\label{CliffordThm}
Let $v = (a_1,\dots , a_{2n}) \in F^{2n}$. Suppose $\phi(v)$ is connected by $r$ steps to $w \in F^{2n+2}$ such that $C_m(\psi(w))$ is Brauer equivalent to a tensor product of $s$ cyclic algebras of degree $2^{m-1}$. Then the symbol length of $C_m(v)$ in ${_{2^{m-1}}Br}(F)$ is at most $r+s$.
\end{thm}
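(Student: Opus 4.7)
The plan is to telescope along the chain connecting $\phi(v)$ to $w$, using Lemma~\ref{Step} at each step. First I would observe that both types of edge preserve the square-product condition defining $B$: a full edge scales one slot by a square, and a dashed edge scales two slots by the same factor, whose square therefore enters the overall product. Hence any chain from $\phi(v)\in B$ to $w\in B$ stays entirely within $B$, and $\psi$ together with Lemma~\ref{Step} can be applied at every intermediate vertex.

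Fix such a chain $\phi(v)=v_0,v_1,\ldots,v_r=w$ realizing the connection of length $r$. For each $i\in\{1,\ldots,r\}$, Lemma~\ref{Step} supplies a cyclic algebra $S_i$ of degree $2^{m-1}$ with
$$C_m(\psi(v_{i-1}))\otimes C_m(\psi(v_i))^{op}\ \sim_{Br}\ S_i.$$
Since each $C_m(\psi(v_i))$ has exponent dividing $2^m$, tensoring these $r$ Brauer equivalences together and cancelling telescopically (each consecutive pair $C_m(\psi(v_i))^{op}\otimes C_m(\psi(v_i))$ is split) gives
$$C_m(\psi(v_0))\otimes C_m(\psi(v_r))^{op}\ \sim_{Br}\ S_1\otimes\cdots\otimes S_r.$$
By the remark that $\psi\circ\phi=\operatorname{Id}$, the left-hand side is exactly $C_m(v)\otimes C_m(\psi(w))^{op}$.

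By hypothesis, $C_m(\psi(w))\sim_{Br} T_1\otimes\cdots\otimes T_s$ for cyclic algebras $T_1,\ldots,T_s$ of degree $2^{m-1}$. Tensoring this equivalence with the previous display yields
$$C_m(v)\ \sim_{Br}\ S_1\otimes\cdots\otimes S_r\otimes T_1\otimes\cdots\otimes T_s,$$
which exhibits $C_m(v)$ as a tensor product of at most $r+s$ cyclic algebras of degree $2^{m-1}$, so its symbol length in ${_{2^{m-1}}Br}(F)$ is bounded by $r+s$. The argument is essentially formal once Lemma~\ref{Step} is available; the only point that seems to require minor care is the invariance of $B$ under both edge types, which is what keeps $\psi$ and Lemma~\ref{Step} applicable throughout the chain.
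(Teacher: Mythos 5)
Your proof is correct and takes essentially the same route as the paper's: both telescope the Brauer class of $C_m(v)$ along the chain, applying Lemma~\ref{Step} to each consecutive pair $C_m(\psi(v_{i-1}))\otimes C_m(\psi(v_i))^{op}$ and appending the $s$-symbol presentation of $C_m(\psi(w))$. Your added check that both edge types preserve membership in $B$ (so Lemma~\ref{Step} applies at every intermediate vertex) is a small point the paper leaves implicit, and it is a welcome clarification rather than a deviation.
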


\begin{proof}
Let $u_0=\phi(v) , u_1 , \dots , u_{r-1}, u_r=w$ be the steps (full or dashed) connecting $\phi(v)$ and $w$.
The Brauer of $C_m(v)=C_m(\psi(u_0))$ decomposes as the tensor product of the following classes:

\begin{eqnarray*}
C_m(\psi(u_0))&\otimes & C_m(\psi(u_1))^{op}\\
C_m(\psi(u_1))&\otimes & C_m(\psi(u_2))^{op}\\
&\vdots &\\
C_m(\psi(u_{r-1}))&\otimes &C_m(\psi(u_r))^{op}\\
C_m(\psi(u_r)). & \ &
\end{eqnarray*}
The last class is a Brauer equivalent to a tensor product of $s$ cyclic algebras of degree $2^{m-1}$, and by Lemma \ref{Step}, each of the other classes is Brauer equivalent to a cyclic algebra of degree $2^{m-1}$. The statement thus follows.
\end{proof}

\begin{lem}\label{PFlem}
If $\varphi=(ae,be,bf,abe,abf,ce,cf,de,df,cde,cdf,af)$, then the algebra $C_m(\psi(\varphi))$ is Brauer equivalent to $(a^6b^8c^3d^2e^{10}f^5,\frac{f}{e})_{2^{m-1}}$.
\end{lem}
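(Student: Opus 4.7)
The plan is to do a direct computation. The first step is to apply the definition of $\psi$ to obtain $(a_1,\dots,a_{10})=\psi(\varphi)$ as explicit monomials in $a,b,c,d,e,f$. The input $\varphi$ is arranged so that its consecutive pairs $(b_{2i},b_{2i+1})=(be,bf),(abe,abf),(ce,cf),(de,df),(cde,cdf)$ differ only by the swap $e\leftrightarrow f$. Propagating the cumulative products $\prod_{j=1}^{2i-1} b_j$ through the recursion then gives $a_{2i-1}=(e/f)\,a_{2i}$ for every $i\in\{1,\dots,5\}$, which is the structural observation that drives the whole computation.

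By bilinearity of the cyclic symbol in the first slot, each factor splits as
\[
(a_{2i-1},a_{2i})_{2^m,F}=((e/f)\,a_{2i},a_{2i})_{2^m,F}\sim_{Br}(e/f,a_{2i})_{2^m,F}\otimes(a_{2i},a_{2i})_{2^m,F}.
\]
The factors $(a_{2i},a_{2i})_{2^m,F}$ vanish: since $F$ contains a primitive $2^{m+1}$th root of unity $\rho$, the element $-1=\rho^{2^m}$ lies in $F^{\times 2^m}$, so $(x,-1)_{2^m,F}$ is split, and combining this with the universal relation $(x,-x)_{2^m,F}\sim_{Br}0$ forces $(x,x)_{2^m,F}\sim_{Br}0$. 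Multiplying across $i=1,\dots,5$ collapses $C_m(\psi(\varphi))$ to the single class $(e/f,\prod_{i=1}^{5} a_{2i})_{2^m,F}$.

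Next I would compute the product $\prod_{i=1}^{5}a_{2i}$ explicitly and expect to land on $a^{12}b^{16}c^{6}d^{4}e^{15}f^{15}$, which factors as $(a^{6}b^{8}c^{3}d^{2}e^{10}f^{5})^{2}\cdot(f/e)^{5}$. The trailing factor $(f/e)^{5}$ contributes $(e/f,(f/e)^{5})_{2^m,F}=(e/f,f/e)_{2^m,F}^{5}\sim_{Br}0$ by the same $(x,x)\sim_{Br}0$ identity. What remains is $(e/f,(a^{6}b^{8}c^{3}d^{2}e^{10}f^{5})^{2})_{2^m,F}$, which by Remark \ref{DefRem} (the identity $C_m(w)^{\otimes 2}\sim_{Br}C_{m-1}(w)$ applied to a single-symbol $w$) coincides with $(e/f,a^{6}b^{8}c^{3}d^{2}e^{10}f^{5})_{2^{m-1},F}$ in the Brauer group. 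Swapping the slots via $(x,y)_{n,F}\sim_{Br}(y,x^{-1})_{n,F}$ then produces the claimed expression $(a^{6}b^{8}c^{3}d^{2}e^{10}f^{5},f/e)_{2^{m-1},F}$.

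The only real obstacle is the bookkeeping: carrying six variables through five iterations of the recursion for $\psi$, verifying the uniform ratio $a_{2i-1}/a_{2i}=e/f$, and checking that the exponents of $\prod_{i} a_{2i}$ line up exactly as $(a^{6}b^{8}c^{3}d^{2}e^{10}f^{5})^{2}(f/e)^{5}$. Everything after that is a mechanical application of bilinearity, the vanishing of $(x,x)_{2^m,F}$ granted by $\mu_{2^{m+1}}\subset F$, and the degree-reduction identity from Remark \ref{DefRem}.
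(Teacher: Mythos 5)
Your proposal is correct and takes essentially the same approach as the paper: both compute $\psi(\varphi)$ explicitly, exploit the uniform ratio $a_{2i-1}/a_{2i}=e/f$ across the five pairs to collapse everything into a single symbol by bilinearity, and then use the square in one slot (via Remark~\ref{DefRem}) to descend from degree $2^m$ to $2^{m-1}$. The only difference is bookkeeping: the paper folds the ratio into the \emph{second} slot via $(a,b)_{2^m,F}\sim (a,a^{-1}b)_{2^m,F}$, obtaining $(a^{12}b^{16}c^{6}d^{4}e^{20}f^{10},\frac{f}{e})_{2^m,F}$ in one stroke, whereas you keep $e/f$ in the \emph{first} slot and hence need the extra (valid, since $-1\in (F^\times)^{2^m}$ by the root-of-unity hypothesis) identities $(x,x)_{2^m,F}\sim 0$, the removal of $(f/e)^5$, and a final slot swap --- all of which, together with your stated exponents $a^{12}b^{16}c^{6}d^{4}e^{15}f^{15}=(a^{6}b^{8}c^{3}d^{2}e^{10}f^{5})^{2}(f/e)^{5}$, check out.
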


\begin{proof}
A straight forward computation shows that $C_m(\psi(\varphi))$ is
$$\begin{matrix}
 & (abe^2,abef)_{2^m,F} \otimes (a^2b^3e^3f,a^2b^3e^2f^2)_{2^m,F} \otimes (a^3b^4ce^4f^2,a^3b^4ce^3f^3)_{2^m,F} \\
 & \otimes (a^3b^4c^2de^5f^3,a^3b^4c^2de^4f^4)_{2^m,F} \otimes (a^3b^4c^3d^3e^6f^4,a^3b^4c^3d^3e^5f^5)_{2^m,F}.
\end{matrix}$$ 
Using the identity $(a,b)_{2^m,F}=(a,a^{-1}b)_{2^m,F}$ for each of the 5 terms above, and then simply multiplying them all (since they will have a common second slot, $\frac{f}{e}$), we get that it is equivalent to $(a^{12}b^{16}c^{6}d^{4}e^{20}f^{10},\frac{f}{e})_{2^m,F}$ which is actually $(a^6b^8c^3d^2e^{10}f^5,\frac{f}{e})_{2^{m-1},F}$, as desired.
\end{proof}

\begin{thm}
If $A$ is a tensor product of 5 cyclic algebras of degree $2^m$ and $A$ is of exponent dividing $2^{m-1}$, then $A$ is Brauer equivalent to a tensor product of up to $200$ cyclic algebras of degree $2^{m-1}$.
\end{thm}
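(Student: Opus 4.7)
The plan is to apply the chain lemma for quadratic forms together with Pfister's structural description of 12-dimensional forms in $I^3 F$, exactly as the introduction suggests. Write $A=C_m(v)$ with $v=(a_1,\dots,a_{10})\in(F^\times)^{10}$ and set $\phi(v)=(b_1,\dots,b_{12})\in B$. First I convert the exponent hypothesis into a property of the associated 12-dimensional form: by Remark \ref{DefRem}, $A^{\otimes 2^{m-1}}\sim_{Br} C_1(v)=C_1(\psi(\phi(v)))$, and this last algebra is Brauer equivalent to the Clifford algebra of the 12-dimensional form $\langle \phi(v)\rangle$. Since $\phi(v)\in B$ the form has trivial discriminant, hence already lies in $I^2F$; the vanishing of its Clifford class is then exactly the classical criterion that places $\langle\phi(v)\rangle$ in $I^3F$.

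The second step is to invoke Pfister's description of $12$-dimensional forms in $I^3F$: every such form is isometric to $\langle e,f\rangle\otimes\langle a,b,ab,c,d,cd\rangle$ for some $a,b,c,d,e,f\in F^\times$. Expanding the tensor and regrouping slots, this is the diagonal form with entries $ae,be,abe,ce,de,cde,af,bf,abf,cf,df,cdf$, which is precisely $\langle\varphi\rangle$ for the $12$-tuple $\varphi=(ae,be,bf,abe,abf,ce,cf,de,df,cde,cdf,af)$ of Lemma \ref{PFlem} (same multiset of entries). Thus $\phi(v)$ and $\varphi$ are two vertices of the graph, both lying inside $B$, representing isometric quadratic forms.

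It then remains to cash in. Theorem \ref{ChainQ} applied with $n=12$ connects $\phi(v)$ to $\varphi$ by a chain of length at most $n^2+5n-5=199$, and Lemma \ref{PFlem} says that $C_m(\psi(\varphi))$ is Brauer equivalent to a single symbol $(a^6b^8c^3d^2e^{10}f^5,\frac{f}{e})_{2^{m-1},F}$. Therefore Theorem \ref{CliffordThm} with $r=199$ and $s=1$ yields that the symbol length of $A$ in ${_{2^{m-1}}Br}(F)$ is at most $200$. The one ingredient that goes beyond bookkeeping is the correct invocation of Pfister's theorem and the identification of his canonical form with the explicit $12$-tuple of Lemma \ref{PFlem}; once that identification is in place, the chain-length count and the reduction to a single symbol follow mechanically from Theorems \ref{ChainQ} and \ref{CliffordThm}.
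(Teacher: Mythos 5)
Your proposal is correct and follows essentially the same route as the paper's own proof: reduce the exponent hypothesis to $\langle\phi(v)\rangle\in I^3F$ via Remark \ref{DefRem}, apply Pfister's theorem to identify the form with the explicit $12$-tuple of Lemma \ref{PFlem}, connect the two vertices by at most $12^2+5\cdot 11=199$ steps using Theorem \ref{ChainQ}, and conclude with $r=199$, $s=1$ in Theorem \ref{CliffordThm}. Your explicit check that the expansion of $\langle e,f\rangle\otimes\langle a,b,ab,c,d,cd\rangle$ matches the multiset of slots in $\varphi$ is a detail the paper leaves implicit, but the argument is identical in substance.
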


\begin{proof}
Write $A=(\alpha_1,\beta_1)_{2^m,F} \otimes \dots \otimes (\alpha_5,\beta_5)_{2^m,F}$. Set $\pi=\phi(\alpha_1,\beta_1, \dots ,\alpha_5,\beta_5)$. Hence, $A=C_m(\psi(\pi))$. By Remark \ref{DefRem}, $A^{2^{m-1}} \sim_{Br} (\alpha_1,\beta_1)_{2,F} \otimes \dots \otimes (\alpha_5,\beta_5)_{2,F}$.
The form $\langle \pi \rangle$ is in $I^2 F$, but since its Clifford algebra is Brauer equivalent to $(\alpha_1,\beta_1)_{2,F} \otimes \dots \otimes (\alpha_5,\beta_5)_{2,F}$, and the latter is Brauer equivalent to $A^{2^{m-1}}$, which is split, $\langle \pi \rangle$ is actually in $I^3 F$. Since it is a 12-dimensional form in $I^3 F$, by Pfister's Theorem, (\cite{Pfister:1966}), it is isometric to 
$$\langle a,b,ab,c,d,cd \rangle \otimes \langle e,f \rangle \simeq \langle ae,be,bf,abe,abf,ce,cf,de,df,cde,cdf,af \rangle$$ for some $a,b,c,d,e,f \in F^{\times}$.
Thus,  $\pi$ is connected to $$\varphi=(ae,be,bf,abe,abf,ce,cf,de,df,cde,cdf,af)$$ by a chain consisting of at most $12^2+5\cdot (12-1)=199$ steps, by Theorem \ref{ChainQ}. By Lemma \ref{PFlem}, $C_m(\psi(\varphi))$ is Brauer equivalent to one cyclic algebra of degree $2^{m-1}$, and by 
Theorem \ref{CliffordThm} we conclude that $A=C_m(\psi(\pi))$ is Brauer equivalent to a tensor product of at most 200 cyclic algebras of degree $2^{m-1}$.
\end{proof}

Let us now use this result to find an upper bound for the symbol length of classes in ${_{2^{m-1}}Br}(F)$ which are represented by tensor products of 6 cyclic algebras of degree $2^{m}$.

\sloppy By \cite[Page 8]{Baek:2019}, every 14-dimension quadratic form $q$ in $I^3(F)$ is isometric either to 
\begin{eqnarray*}
\langle T(u),T(u)^{-1}N(u),T(v),T(v)^{-1}N(v),T(uv), T(uv)^{-1}N(uv)^{-1},T(w),T(w)N(w),\\
T(uw),T(uw)N(uw),T(vw),T(vw)N(vw),T(uvw),T(uvw)N(uvw) \rangle,
\end{eqnarray*} where $T$ is a trace form and $N$ is a norm form of some quadratic field extension $F[\sqrt{d}]/F$ and $u,v,w \in {F[\sqrt{d}]}^\times$, or to $\langle 1,1 \rangle \perp q'$ for a 12-dimensional form $q'$. Note that we assume that $-1 \in (F^{\times})^{2}$, and that the product of the initial 6 slots (in the first case) is 1, and in the second case, the product of the first two slots is 1. 

\begin{thm}
If $A$ is a tensor product of 6 cyclic algebras of degree $2^m$ and $A$ is of exponent dividing $2^{m-1}$, then $A$ is Brauer equivalent to a tensor product of up to $362$ cyclic algebras of degree $2^{m-1}$.
\end{thm}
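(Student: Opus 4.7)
The plan is to mirror the 5-cyclic argument while handling separately the two shapes of 14-dimensional forms in $I^3 F$. Write $A = \bigotimes_{i=1}^{6}(\alpha_i,\beta_i)_{2^m,F}$ and set $\pi = \phi(\alpha_1,\beta_1,\dots,\alpha_6,\beta_6)$, a 14-tuple in $B$. Exactly as in the previous theorem, $\langle \pi \rangle$ lies in $I^2 F$, and because $A^{2^{m-1}}$ is split, its Clifford invariant vanishes, so $\langle \pi \rangle$ is in $I^3 F$. By the classification quoted just before the theorem, $\langle \pi \rangle$ is isometric either to a form of the shape $\langle 1,1 \rangle \perp q'$ with $q'$ a 12-dimensional form in $I^3 F$, or to Rost's 14-dimensional form built from trace and norm over a quadratic extension.

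In the first case, apply Pfister's theorem to $q'$ to obtain parameters $a,b,c,d,e,f \in F^\times$, and use Theorem \ref{ChainQ} to connect $\pi$ in at most $14^2 + 5 \cdot 13 = 261$ steps to the 14-tuple $\varphi = (1,1,ae,be,bf,abe,abf,ce,cf,de,df,cde,cdf,af)$. A direct expansion from the definition of $\psi$ gives $a_1 = b_1 b_2 = 1$, so the first tensor factor of $C_m(\psi(\varphi))$ is split, while the remaining five slot-pairs are precisely those in Lemma \ref{PFlem}. Hence $C_m(\psi(\varphi))$ is Brauer equivalent to a single cyclic algebra of degree $2^{m-1}$, and Theorem \ref{CliffordThm} yields a bound of $261 + 1 = 262$ in this case.

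For Rost's form the key is not to chain the full 14-tuple, but to apply Theorem \ref{ChainQ2} with $k=6$, which connects $\pi$ in at most $6(2 \cdot 14 - 6) + 5 \cdot 6 = 162$ steps to a 14-tuple $\varphi'$ whose first six coordinates are precisely the first six slots of Rost's form, namely $T(u)$, $T(u)^{-1}N(u)$, $T(v)$, $T(v)^{-1}N(v)$, $T(uv)$, $T(uv)^{-1}N(uv)^{-1}$. Multiplicativity of $N$ gives $\prod_{j=1}^{6} b_j = N(u) N(v) N(uv)^{-1} = 1$, and expanding $\psi(\varphi')$ then yields $a_5 = \prod_{j=1}^{6} b_j = 1$. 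Consequently the third tensor factor $(a_5,a_6)_{2^m,F}$ of $C_m(\psi(\varphi'))$ is split, so $C_m(\psi(\varphi'))$ is Brauer equivalent to a tensor product of only five cyclic algebras of degree $2^m$. Theorem \ref{CliffordThm} implies that this class differs from $A$ by a product of symbols in ${_{2^{m-1}}\operatorname{Br}}(F)$, and since $A$ has exponent dividing $2^{m-1}$, so does the five-fold tensor product. Applying the preceding theorem (the 5-symbol bound of 200) to it and then combining with the partial chain via Theorem \ref{CliffordThm} gives a bound of $162 + 200 = 362$.

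The main conceptual obstacle is exactly this Rost case. Chaining all 14 slots via Theorem \ref{ChainQ} would cost $261$ steps and would still leave five non-split cyclic algebras of degree $2^m$, producing the wasteful bound $261 + 200 = 461$. Using the partial chain lemma Theorem \ref{ChainQ2} with precisely $k=6$ is just enough to force $a_5 = 1$, at a cost of only $162$ steps, and the appeal to the 5-cyclic theorem then does the rest. Taking the maximum over the two cases, $\max(262, 362) = 362$, gives the claimed bound.
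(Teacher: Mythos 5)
Your proof is correct, and in the decisive case --- the Rost form --- it coincides with the paper's own argument: Theorem \ref{ChainQ2} with $k=6$ ($6(2\cdot 14-6)+30=162$ steps), the computation $a_5=\prod_{j=1}^{6}b_j=N(u)N(v)N(uv)^{-1}=1$ splitting the third tensor factor, and an appeal to the 5-symbol theorem, giving $162+200=362$. You even justify a point the paper only asserts, namely that $C_m(\psi(\varphi'))$ has exponent dividing $2^{m-1}$ because it differs from $A$ by a product of degree-$2^{m-1}$ symbols. Where you genuinely diverge is the branch $\langle 1,1\rangle \perp q'$: the paper applies Theorem \ref{ChainQ2} with $k=2$ (at most $2(2\cdot 14-2)+10=62$ steps) to reach $(1,1,c_3,\dots,c_{14})$, notes that $a_1=b_1b_2=1$ splits the first factor, and recurses to the 5-symbol bound, getting $62+200=262$ in that branch; you instead run the full chain lemma (Theorem \ref{ChainQ}, $14^2+5\cdot 14-5=261$ steps) to the explicit tuple $(1,1,ae,be,bf,abe,abf,ce,cf,de,df,cde,cdf,af)$ and invoke Lemma \ref{PFlem} directly --- correctly observing that prepending $(1,1)$ leaves the partial products $\prod_j b_j$ unchanged, so after the split factor $(1,ae)_{2^m,F}$ the remaining five factors are exactly those computed in Lemma \ref{PFlem}, yielding one symbol and the count $261+1=262$. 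The two routes coincidentally land on the same $262$ in that branch, so the final bound $\max(262,362)=362$ is unchanged; the paper's version buys uniformity (both branches reduce to the 5-symbol theorem, and it uses far fewer chain steps in the easy case), while yours trades chain steps for a concrete single-symbol computation and avoids a second appeal to the $200$ bound.
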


\fussy
\begin{proof}
Write $A=(\alpha_1,\beta_1)_{2^m,F} \otimes \dots \otimes (\alpha_6,\beta_6)_{2^m,F}$. Set $\pi=\phi(\alpha_1,\beta_1, \dots ,\alpha_6,\beta_6)$. Now, $\langle \pi \rangle$ is a 14-dimensional form in $I^3(F)$. Thus, by Theorem \ref{ChainQ2}, $\pi$ is connected either to 
$$\varphi = (T(u),T(u)^{-1}N(u),T(v),T(v)^{-1}N(v),T(uv),T(uv)^{-1}N(uv)^{-1},c_{7}, \dots, c_{14})$$ by up to $6(2\cdot 14-6)+5\cdot 6=162$ steps, where $u,v \in {F[\sqrt{d}]}^\times$ and $c_{7}, \dots, c_{14} \in F^{\times}$, or to $\varphi=(1,1,c_3,\dots,c_{14})$ by up to $2(2\cdot 14-2)+5\cdot 2=62$ steps, where $c_3,\dots,c_{14} \in F^\times$. In both cases, $C_m(\psi(\varphi))$ is a product of 5 cyclic algebras of degree $2^m$ and of exponent dividing $2^{m-1}$: in the first case because $$\psi(\varphi)=(N(u),T(u)T(v),N(u)N(v),N(u)T(v)T(uv),N(u)N(v)
N(uv)^{-1},\dots)$$
$$=(N(u),T(u)T(v),N(u)N(v),N(u)T(v)T(uv),1,\dots),$$
which means $C_m(\psi(\varphi))=(N(u),T(u)T(v))_{2^m,F} \otimes (N(u)N(v),N(u)T(v)T(uv))_{2^m,F} \otimes (1,*)_{2^m,F} \otimes \dots$, i.e., out of the six cyclic algebras in the decomposition of $C_m(\psi(\varphi))$, one is split; the second case is a similar (and easier) argument. Therefore, $C_m(\psi(\varphi))$ is Brauer equivalent to a tensor product of up to 200 cyclic algebras of degree $2^{m-1}$. Consequently, $A$ is Brauer equivalent to at most $200+162=362$ cyclic algebras of degree $2^{m-1}$.
\end{proof}

\section*{Acknowledgements}

The authors are very much indebted to the referee for the thorough reading of the manuscript and the many helpful comments and suggestions.

\def\cprime{$'$}

\end{document}